\theoremstyle{plain} 
\newtheorem{lemma}[equation]{Lemma} 
\newtheorem{proposition}[equation]{Proposition} 
\newtheorem{theorem}[equation]{Theorem} 
\newtheorem{corollary}[equation]{Corollary} 
\newtheorem{conjecture}[equation]{Conjecture}
\theoremstyle{definition}
\newtheorem{definition}[equation]{Definition} 
\theoremstyle{remark}
\newtheorem{remark}[equation]{Remark}
\newtheorem*{ack}{Acknowledgment}
\numberwithin{equation}{section}
\def\norm#1.#2.{\lVert#1\rVert_{#2}}
\def\Norm#1.#2.{\bigl\lVert#1\bigr\rVert_{#2}}
\def\NOrm#1.#2.{\Bigl\lVert#1\Bigr\rVert_{#2}}
\def\NORm#1.#2.{\biggl\lVert#1\biggr\rVert_{#2}}
\def\NORM#1.#2.{\Biggl\lVert#1\Biggr\rVert_{#2}}
\def\ip#1,#2,{\langle #1,#2\rangle}
\def\Ip#1,#2,{\langle#1,#2\rangle}
\def\IP#1,#2,{\langle#1,#2\rangle}
\def\mid{\,:\,}
\def\XXint#1#2#3{{\setbox0=\hbox{$#1{#2#3}{\int}$}
     \vcenter{\hbox{$#2#3$}}\kern-.5\wd0}}
\title {On Muckenhoupt-Wheeden Conjecture}
\author[M.C. Reguera]{Maria Carmen Reguera}
\thanks{Research supported in part by NSF grant 0456611}
\address{ School of Mathematics, Georgia Institute of Technology, Atlanta GA 30332, USA}
\email {mreguera@math.gatech.edu}
\begin{document}

\begin{abstract}
Let $ M$ denote  the dyadic Maximal Function.  
We show that there is a weight $ w$,  and  Haar multiplier $ T$ for which the following  weak-type inequality fails.  
$$
\sup_{t>0}t w\left(\left\{  x\in\mathbb R \mid |Tf(x)|>t\right\}\right)\le C \int_{\mathbb R}|f|Mw(x)dx.
$$ 
(With $ T$ replaced by $ M$, this is a well-known fact.) This shows that a dyadic version of 
the so-called Muckenhoupt-Wheeden Conjecture is false.  This accomplished by  using current techniques 
in weighted inequalities to show that a particular $ L ^2 $ consequence of the inequality above does not hold.  
\end{abstract}

\maketitle

\section{Introduction} 


The starting point of this work goes back to 1971 \cite{MR0284802}, when C. Fefferman and E. Stein, 
in order to establish vector-valued estimates for the maximal function,
proved that if $w$ is a weight, namely a non-negative locally integrable function, 
and $M$ denotes the Hardy-Littlewood maximal operator then
\begin{equation}
\sup_{t>0}t w\left(\left\{ x\in \mathbb R^{n}\mid Mf(x)>t\right\}\right)\leq c\int_{\mathbb R^{n}}|f|Mw(x).
\end{equation}


A very natural question was then raised by B. Muckenhoupt and R. Wheeden (see \cite{MR2427454}):
could we replace the Hardy-Littlewood maximal operator $M$ by a Calder\'on-Zygmund operator? Their conjecture, known as Muckenhoupt-Wheeden Conjecture, is stated below.

\begin{conjecture} (Muckenhoupt-Wheeden) Let $w$ be a weight and $M$ be the Hardy-Littlewood maximal operator. Let $T$ be a Calder\'on-Zygmund operator with $\lVert T \rVert_{CZO}\le 1$. Then
\begin{equation}\label{e.MWC}
\sup_{t>0}t w\left(\left\{  x\in\mathbb R \mid |Tf(x)|>t\right\}\right)\le C \int_{\mathbb R}|f|Mw(x)dx
\end{equation}
\end{conjecture}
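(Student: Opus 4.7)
Following the strategy signaled in the abstract, the plan is two-step: first extract a testable $L^2$ consequence of \eqref{e.MWC}, then construct an explicit pair $(w, T)$ that violates this $L^2$ consequence and, a fortiori, \eqref{e.MWC}.

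For Step~1, I would argue that if \eqref{e.MWC} held with a uniform constant across all Haar multipliers $T$ of norm at most $1$ and all weights $w$, then combining this weak-$(1,1)$ endpoint with the unweighted $L^2$-boundedness of $T$ (via a good-$\lambda$ argument or an extrapolation of Fefferman--Stein type) yields the strong two-weight $L^2$ inequality
\[
\int |Tf|^{2} \, w \, dx \;\le\; C \int |f|^{2} \, Mw \, dx,
\]
uniform in $f, w, T$. This is the statement to be refuted.

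For Step~2, I would build $w$ and $T$ by an inductive procedure over $N$ dyadic scales. The weight $w$ is concentrated on a sparse Cantor-type subset of $[0,1]$: at each scale, for every dyadic cell $I$ still carrying mass, one selects a single ``heavy'' sub-halving where $w$ receives most of its mass on $I$. The Haar multiplier $T = \sum_I \varepsilon_I \langle \cdot, h_I \rangle h_I$ is defined by choosing $\varepsilon_I \in \{\pm 1\}$ so that, for $f = \chi_{[0,1]}$, the Haar contribution indexed by $I$ is positive precisely on the heavy sub-halving of $I$. With these choices, the contributions of the $N$ ancestors of any point in $\mathrm{supp}(w)$ add constructively, and a direct computation gives
\[
\int |Tf|^{2} \, w \, dx \;\gtrsim\; N \cdot \int |f|^{2} \, Mw \, dx.
\]
Choosing $N$ sufficiently large (or passing to a limit) violates the $L^2$ consequence from Step~1.

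The main obstacle is calibrating the construction so that the two sides really decouple as $N$ grows: the support of $w$ must be sparse enough that $Mw$ stays uniformly bounded on $[0,1]$ independently of $N$, while the signs $\varepsilon_I$ must be coordinated with the locations of the heavy sub-halvings so that the contributions from distinct scales add coherently on $\mathrm{supp}(w)$ rather than cancel. Striking this balance through the inductive definition---choosing at each level both the heavy child of each surviving cell and the compatible sign---is the technical heart of the argument; once the construction is in place, the verification reduces to routine Haar-orthogonality computations.
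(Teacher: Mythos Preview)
Your plan has the right overall shape---reduce to an $L^2$ inequality and then violate it with an explicit $(w,T)$---but Step~2 fails outright as written. For any dyadic $I\subseteq[0,1)$ one has $\langle \chi_{[0,1)}, h_I\rangle=0$, since $\chi_{[0,1)}$ is constant on $I$ and $h_I$ has mean zero. So a Haar multiplier built from intervals inside $[0,1)$ annihilates $\chi_{[0,1)}$; the only surviving terms come from $I\supsetneq[0,1)$, and those are constant on $[0,1)$. Hence $T\chi_{[0,1)}$ is constant on $[0,1)$ no matter how you choose the signs $\varepsilon_I$ or the weight $w$, and $\int|Tf|^2\,w$ cannot grow with $N$. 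The paper circumvents this by passing to the dual formulation: one tests $T$ against $w\cdot 1_{[0,1)}$ (not $1_{[0,1)}$) and measures the output in $L^2(\sigma)$ with $\sigma=w/(Mw)^2$. The Haar coefficients $\langle w,h_I\rangle$ are then genuinely nonzero and can be arranged to accumulate without cancellation.

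Step~1 is also off. The $L^2$ consequence you write, $\int|Tf|^2 w\le C\int|f|^2\,Mw$, is \emph{stronger} than the one actually known to follow from \eqref{e.MWC}: the Cruz-Uribe--P\'erez extrapolation gives $\int|Tf|^2 w\le C\int|f|^2\,(Mw)^2/w$, and since $(Mw)^2/w\ge Mw$ always, your inequality does not follow from it. Your good-$\lambda$/interpolation sketch does not supply the missing endpoint (Haar multipliers are unbounded on $L^\infty$), so even if you could disprove your stronger inequality, that would not refute \eqref{e.MWC}. Relatedly, your requirement that ``$Mw$ stays uniformly bounded on $[0,1]$'' while $w$ concentrates on a sparse set is self-defeating: concentrating $w$ forces $Mw$ to blow up near $\operatorname{supp} w$. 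The paper's construction is considerably more delicate than a Cantor set with heavy halves: it exploits the periodic binary expansion of $\tfrac13$ to align $2k+1$ Haar signs along a nested chain, then redistributes mass onto left-left grandchildren (heights multiplied by $6$) so as to preserve all earlier Haar coefficients while creating a new corona level---this measure-preserving bookkeeping, iterated exponentially many times in $k$, is what makes the estimates from different generations add up.
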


The exact definition of Calder\'on-Zygmund operator need not concern us here, though it certainly includes the non-positive Hilbert transform (see chapter VII of \cite{MR1232192} for precise definitions). The hope was that the Conjecture identified a somewhat robust principle.  We herein 
disprove the \emph{dyadic version} of this conjecture.  So, $ M$ is replaced by the (smaller) dyadic maximal 
function, and $ T$ will be a Haar multiplier, which are the simplest possible dyadic Calder\'on-Zygmund operators.


Endpoint estimates are known to be the most delicate ones, and very frequently they are also the most powerful.
That is the case of Muckenhoupt-Wheeden  Conjecture. For instance,  
an extrapolation result due to D. Cruz-Uribe and C. P\'erez \cite{MR1761362} shows this: 
If $w$ is a weight and \eqref{e.MWC} holds with $T$ a sublinear operator then
\begin{equation}\label{e.extrap}
\int_{\mathbb R} |T(f)|^{p}w(x)dx\le \int_{\mathbb R} |f|^{p}\left(\frac{Mw}{w}\right)^{p}w(x)dx.
\end{equation}
The dyadic version of this result is also true (see \cite{MR1761362}, Remark 1.5).

With a few partial results that we shall discuss later in the introduction,
the Muckenhoupt-Wheeden Conjecture has been open up to today's date.
In this paper, we answer the dyadic version of 
\eqref{e.MWC} in the negative by disproving \eqref{e.extrap}.  	
We are ready to state our main theorem.


\begin{theorem}\label{t.main}
There exist a weight $w$ and a Haar multiplier $T$ for which $T$ is unbounded as map from  $L^{2}\left((\frac{Mw}{w})^{2}w\right)$ to $L^{2}(w)$.
\end{theorem}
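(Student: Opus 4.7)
\medskip

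\noindent\textbf{Proof plan.}

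The plan begins by reducing Theorem \ref{t.main} to the original conjecture via the extrapolation statement \eqref{e.extrap} (in its dyadic form). If the dyadic version of \eqref{e.MWC} were true, then taking $p=2$ in \eqref{e.extrap} would give $T\colon L^{2}((Mw/w)^{2}w)\to L^{2}(w)$ boundedly for every weight $w$ and every Haar multiplier $T$. Consequently, producing a single pair $(w,T)$ violating this $L^{2}$ mapping property is the entire content of what has to be done, and the rest of the argument is the construction and analysis of that pair.

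Before building the example, I would record a structural observation that tells me where the counterexample \emph{cannot} come from. Setting $u=(Mw/w)^{2}w$, the two-weight $A_{2}$ characteristic of $(w,u)$ is trivially bounded: for every dyadic interval $I$ and every $x\in I$ one has $Mw(x)\ge \langle w\rangle_{I}$, so $u^{-1}=w/(Mw)^{2}\le 1/\langle w\rangle_{I}$ on $I$, whence $\langle w\rangle_{I}\langle u^{-1}\rangle_{I}\le 1$. Thus the obstruction to $L^{2}(u)\to L^{2}(w)$ boundedness must come from the strictly stronger testing/energy conditions, and any counterexample has to live in this "beyond $A_{2}$" regime. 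This is useful both as a sanity check and as guidance: the construction must exploit the gap between two-weight $A_{2}$ and full two-weight $L^{2}$ boundedness for Haar multipliers.

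With this in mind, the plan is a self-similar construction at $N$ dyadic scales. I would fix a nested chain of dyadic intervals $J_{0}\supset J_{1}\supset\cdots\supset J_{N}$ with $|J_{k+1}|=\tfrac12|J_{k}|$, and build $w$ by successively redistributing mass between the two children of each $J_{k}$, so that $\langle w\rangle_{J_{k}}$ grows in $k$ at a carefully tuned geometric rate. The Haar multiplier $T$ is then defined by choosing signs $\varepsilon_{J_{k}}=+1$ along the chain (the off-chain choices are immaterial and may be taken to be $+1$ as well). The test function $f$ is chosen so that its Haar coefficients along the chain all have the same sign, for instance $f=\chi_{J_{N}}$ or a simple linear combination of $\chi_{J_{k}}$'s, in order that $Tf$ accumulates constructively on $J_{0}\setminus J_{N}$ where $w$ still carries substantial mass.

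The main estimates are then to prove two bounds: first, that $\|Tf\|_{L^{2}(w)}^{2}\gtrsim N$, obtained by summing the contributions of the Haar coefficients of $f$ along the chain against the weight $w$ on the "large" sides $J_{k}\setminus J_{k+1}$; and second, that $\|f\|_{L^{2}(u)}^{2}=O(1)$ uniformly in $N$, which relies on the fact that on $J_{N}$ (where $f$ is concentrated) one has $Mw/w\approx 1$ because $w$ is nearly constant on $J_{N}$. Letting $N\to\infty$ forces the operator norm to be infinite. The principal obstacle will be calibrating the splitting ratios at each level: they must be large enough for $Tf$ to accumulate genuinely new mass at every scale, yet small enough that $Mw/w$ stays bounded precisely on the small interval where the test function lives. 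This is the delicate point where the computation must exploit exactly the slack between two-weight $A_{2}$ (which is finite) and the true two-weight testing condition (which must fail), and I expect it to require a geometric choice of parameters together with an explicit telescoping estimate for $Mw$ on the chain.
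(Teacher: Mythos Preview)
Your plan has a genuine gap at the construction stage.

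First, taking all $\varepsilon_I = +1$ makes $T$ the identity on the span of the Haar system, and since $u = (Mw/w)^2 w \geq w$ pointwise, the identity is trivially bounded from $L^2(u)$ to $L^2(w)$ with norm at most $1$. So no choice of $w$ can make this $T$ unbounded. The phrase ``$Tf$ accumulates constructively'' has no content here: $Tf = f - \langle f\rangle_{J_0}$, and nothing accumulates beyond what $f$ already is.

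Second, even allowing arbitrary signs, the single-chain geometry with $|J_{k+1}| = \tfrac12 |J_k|$ cannot produce the growth $\|Tf\|_{L^2(w)}^2 \gtrsim N$ against $\|f\|_{L^2(u)}^2 = O(1)$. If $\langle w \rangle_{J_k}$ grows like $a^k$ with $1 < a < 2$ (the only admissible range for a nonnegative weight), then $\langle w, h_{J_k}\rangle/|J_k|^{1/2}$ is a constant times $a^k$, and on each annulus $J_m \setminus J_{m+1}$ the value $|Tw(x)|$ is a signed geometric sum $\sum_{k \le m} \pm c\, a^k$ dominated by its last term, hence $|Tw(x)| = O(a^m) = O(Mw(x))$. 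Dually, with $f = \chi_{J_N}$ one finds $|Tf(x)|$ on $J_m \setminus J_{m+1}$ is a signed sum of $|J_N|/|J_j|$ over $j \le m$, again dominated by its last term. Either way the ratio $\|Tf\|_{L^2(w)}/\|f\|_{L^2(u)}$ stays bounded uniformly in $N$. The ``calibration of splitting ratios'' you allude to cannot repair this: geometric growth of the averages forces geometric growth of the Haar coefficients, and a geometric series does not accumulate linearly.

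What the paper does is substantially more elaborate. It works in the dual formulation, testing $T$ on $w$ itself and measuring in $L^2(\sigma)$ with $\sigma = w/(Mw)^2$. The weight is built as a \emph{tree}, not a chain: inside each stopping interval $L$ one places a chain of $2k+1$ nested intervals $I_0\supset\cdots\supset I_{2k}$ (each one quarter the size of the previous) chosen so that the normalized Haar coefficients $\langle w,h_{I_i}\rangle/|I_i|^{1/2}$ are \emph{equal} across all $2k+1$ scales --- this is engineered via a ``jumping point'' at the $1/3$ mark of $L$, exploiting the periodic binary expansion of $1/3$. Equality of the coefficients is precisely what permits genuine linear accumulation, giving $|T_L w| > k \langle w \rangle_L$ on a set of $w$-measure $\approx 2^{-4k} w(L)$, a reversal of the John--Nirenberg inequality. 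The construction then \emph{branches}: each $L$ spawns $2k$ children stopping intervals which together carry $(1 - 2^{-4k})$ of the mass of $L$, so that after roughly $2^{4k}$ corona generations the sum $\sum_L w(L)$ is large enough to compensate the $2^{-4k}$ loss from John--Nirenberg. Finally, a random sign choice and Khintchine's inequality decouple the contributions of the different $T_L$'s. None of these ingredients --- equal Haar coefficients across many scales, the branching tree carrying nearly all the mass forward, the exponentially many corona generations, the sign randomization --- appears in your outline, and each is essential.
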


As a corollary we solve a long-standing conjecture,

\begin{corollary}\label{c.mwfalse}
Muckenhoupt-Wheeden conjecture in its dyadic version is false.
\end{corollary}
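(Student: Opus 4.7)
The plan is to derive the corollary directly from Theorem \ref{t.main} via the dyadic extrapolation result of Cruz-Uribe and P\'erez recorded in \eqref{e.extrap}. The strategy is pure contraposition: the extrapolation result tells us that a weak-type $L^1$ bound against $Mw$ upgrades to a strong-type $L^2$ bound against $(Mw/w)^2 w$; so a failure of the latter forces a failure of the former. All the analytic content sits in Theorem \ref{t.main}, and the corollary is essentially a one-line logical deduction.

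In detail, first I would check that a Haar multiplier $T$ satisfies the hypotheses needed to invoke the dyadic analogue of \eqref{e.extrap} (cf.\ \cite{MR1761362}, Remark 1.5): $T$ is linear, hence sublinear, so the dyadic Cruz-Uribe--P\'erez extrapolation theorem applies. I would then assume, for contradiction, that the dyadic Muckenhoupt--Wheeden conjecture holds: for every weight $w$ and every Haar multiplier $T$ (with appropriate normalization),
\begin{equation*}
\sup_{t>0}\, t\, w\bigl(\{x\in\mathbb R \mid |Tf(x)|>t\}\bigr)\le C\int_{\mathbb R}|f|\,Mw(x)\,dx,
\end{equation*}
where $M$ denotes the dyadic maximal function. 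Applying the dyadic version of \eqref{e.extrap} with $p=2$ would yield
\begin{equation*}
\int_{\mathbb R}|Tf|^2 \, w(x)\,dx \le C'\int_{\mathbb R}|f|^2 \left(\frac{Mw}{w}\right)^{2}w(x)\,dx,
\end{equation*}
valid for \emph{every} weight $w$ and \emph{every} Haar multiplier $T$. This last inequality is exactly the statement that $T\colon L^{2}\bigl((Mw/w)^{2}w\bigr)\to L^{2}(w)$ is bounded.

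Theorem \ref{t.main}, however, exhibits a specific pair $(w,T)$ for which this $L^2 \to L^2$ boundedness fails. That contradiction refutes the hypothesized weak-type inequality, giving the corollary. Since all the difficulty has been absorbed into Theorem \ref{t.main}, there is no real obstacle in the proof of the corollary itself; the only subtle point is to verify that Haar multipliers genuinely lie in the class of operators covered by the dyadic extrapolation theorem, and this reduces to sublinearity, which is immediate from linearity. Hence no further work beyond quoting \cite{MR1761362} and invoking Theorem \ref{t.main} is needed.
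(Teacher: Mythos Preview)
Your proposal is correct and matches the paper's approach exactly: the paper derives the corollary from Theorem~\ref{t.main} by contraposition of the dyadic Cruz-Uribe--P\'erez extrapolation result \eqref{e.extrap}, as explained in the introduction. There is no separate formal proof of the corollary in the paper; your write-up simply makes explicit the one-line deduction that the paper leaves implicit.
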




For the proof we construct a measure $w$ and a Haar multiplier $T$ that avoids all cancellations. The tool behind this construction is the corona decomposition, that has proven to be very useful in finding sharp estimates when the weight is in the $A_{p}$ class \cites{0906.1941,1006.2530,1007.4330,1010.0755}.


Throughout the literature, there has been evidence for a positive answer to the conjecture as well as for a negative one.
S. Chanillo and R. Wheeden \cite{MR891775} showed that 
a square function satisfied the Muckenhoupt-Wheeden Conjecture.
We also mention the work of Buckley \cite{MR1124164}, who  in dimension $n$ proved that \eqref{e.MWC} holds for weights $w_{\delta}(x)=|x|^{-n(1-\delta)}$ for $0<\delta<1$. 

The sharpest results in this direction are due to C. P\'erez \cite{MR1260114}: If $T$ is a Calder\'on-Zygmund operator and $M^{2}$ is the Hardy-Littlewood maximal operator iterated $2$ times, 
$$
\sup_{t>0}t w\left (\left\{  x\in\mathbb R \mid |Tf(x)|>t\right\}\right )\le C \int_{\mathbb R}|f|M^{2}w(x)dx.
$$
He actually proved something better, $M^{2}$ can be replaced by the smaller operator $M_{L(logL)^{\epsilon}}$.
In an attempt to understand these endpoint estimates, A. Lerner, S. Ombrosy and C. P\'erez considered a somehow "dual" problem of Muckenhoupt-Wheeden, we refer the reader to \cite{MR2511869}.
A negative answer to \eqref{e.MWC} was provided by M.J. Carro, C. P\'erez, F. Soria and J. Soria when T is a fractional integral, \cite{MR2151228}. There are two points that distinguish this example from the singular integral one: 1) the lack of cancellation when treating positive operators and 2) the construction depends upon T being a true fractional integral and does not 
allow an immediate extension to the singular integral case.


By imposing an extra condition on the weight $w$, a weaker version of Muckenhoupt-Wheeden can be formulated. This is known as the Weak Muckenhoupt-Wheeden Conjecture and appears in work of A. Lerner, S. Ombrosy and C. P\'erez \cite{MR2427454}, \cite{MR2480568}. 

\begin{conjecture} (Weak Muckenhoupt-Wheeden) Let $w$ be an $A_{1}$ weight and let $\lVert w \rVert_{A_{1}}$ be the $A_{1}$ constant associated to it. Let $T$ be a Calder\'on-Zygmund operator with $\lVert T \rVert_{CZO}\le 1$. Then
\begin{equation}\label{e.wMWC}
\sup_{t>0}t w\left(\left\{  x\in\mathbb R \mid |Tf(x)|>t\right\}\right)\le C \lVert w \rVert_{A_{1}}\int_{\mathbb R}|f|w(x)dx
\end{equation}
\end{conjecture}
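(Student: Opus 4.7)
The plan is to combine a standard Calder\'on--Zygmund decomposition with Hyt\"onen's $A_2$ theorem, using that $A_1\subset A_2$ with $\|w\|_{A_2}\le \|w\|_{A_1}$, so that $\|T\|_{L^2(w)\to L^2(w)}\le C\|w\|_{A_1}$. Fix $t>0$ and a parameter $\lambda>0$ to be optimized. Carry out the Calder\'on--Zygmund decomposition of $f$ at level $\lambda$ relative to Lebesgue measure, producing disjoint dyadic cubes $\{Q_j\}$ with $\lambda<|Q_j|^{-1}\int_{Q_j}|f|\le 2\lambda$, and split $f=g+b$ where $g$ is the good part (bounded a.e.\ by $C\lambda$) and $b=\sum_j b_j$, each $b_j$ supported in $Q_j$ with $\int b_j=0$.

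For the good part, Chebyshev in $L^2(w)$ and the $A_2$ estimate give
\begin{equation}
w\bigl(\{|Tg|>t/2\}\bigr)\le 4t^{-2}\|Tg\|_{L^2(w)}^2 \le Ct^{-2}\|w\|_{A_1}^2\|g\|_{L^2(w)}^2,
\end{equation}
with $\|g\|_{L^2(w)}^2 \lesssim \lambda\|w\|_{A_1}\|f\|_{L^1(w)}$ following from $\|g\|_\infty\lesssim\lambda$ and the pointwise control $Mw\le \|w\|_{A_1}w$. For the bad part, set $\Omega^*=\bigcup 2Q_j$. Two estimates are needed: first, $w(\Omega^*)\lesssim \|w\|_{A_1}^2\lambda^{-1}\|f\|_{L^1(w)}$, obtained by invoking the $A_1$ condition once to pass from $2Q_j$ to $Q_j$ and a second time to convert the stopping inequality $\lambda|Q_j|<\int_{Q_j}|f|$ into its weighted form. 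Second, on $(\Omega^*)^c$, the vanishing moment $\int b_j=0$ and the H\"ormander smoothness of the Calder\'on--Zygmund kernel yield
\begin{equation}
\int_{(\Omega^*)^c}|Tb|\,w\,dx \lesssim \sum_j\int_{Q_j}|b_j(y)|Mw(y)\,dy \le \|w\|_{A_1}\|f\|_{L^1(w)},
\end{equation}
whence Chebyshev produces a contribution $\lesssim t^{-1}\|w\|_{A_1}\|f\|_{L^1(w)}$.

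Optimizing $\lambda$ across these three estimates yields a weak-type bound proportional to $\|w\|_{A_1}(1+\log\|w\|_{A_1})$, the Lerner--Ombrosi--P\'erez inequality. The \emph{main obstacle} is the extra logarithmic factor: one pays a first $\|w\|_{A_1}$ inside the $A_2$ theorem controlling $Tg$, and a second when estimating $w(\Omega^*)$ by comparing $w$-averages on the dilates $2Q_j$ with those on $Q_j$. Removing the logarithm would require either a sharper $L^2(w)$ bound that exploits $A_1$ beyond what the $A_2$ theorem supplies, or a decomposition that avoids dilating the stopping cubes; substituting an $A_\infty$ constant for the second $\|w\|_{A_1}$ is only known to yield logarithmic savings. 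Closing this gap is precisely the crux of the Weak Muckenhoupt--Wheeden problem, and given that the counterexample methods of the present paper already defeat the stronger inequality \eqref{e.MWC}, it is not clear a priori whether the exact linear dependence $C\|w\|_{A_1}$ in \eqref{e.wMWC} is even correct.
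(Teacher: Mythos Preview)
The statement you are addressing is a \emph{conjecture}, not a theorem; the paper does not prove it and says so explicitly: ``The Weak Muckenhoupt-Wheeden Conjecture remains open and we will not make any new contribution to it in this paper.'' There is therefore no proof in the paper against which your proposal can be compared.

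Your proposal is not a proof of the conjecture either, as you yourself acknowledge: the Calder\'on--Zygmund decomposition combined with the sharp $A_2$ bound yields only the estimate $C\lVert w\rVert_{A_1}(1+\log\lVert w\rVert_{A_1})$, precisely the Lerner--Ombrosi--P\'erez result the paper already records as the best known at the endpoint. Your diagnosis of where the extra logarithm enters---once through the $A_2$ theorem on the good part, once through the passage from $2Q_j$ to $Q_j$---is reasonable and matches the surrounding commentary in the paper, but it is a heuristic analysis of the obstruction rather than a proof of \eqref{e.wMWC}. In short, both you and the paper leave \eqref{e.wMWC} open; there is no gap to repair because no proof was claimed on either side.
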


Recall that $w$ is an $A_{1}$ weight if there exists a constant $c>0$ such that $Mw(x)\leq cw(x)$ a.e. The smallest of such constants 
$c$ is denoted by $\lVert w \rVert_{A_{1}}$.  Thus, the Weak Muckenhoupt-Wheeden Conjecture would be 
an immediate consequence of \eqref{e.MWC}, were it true. 
The continuity of Calder\'on-Zygmund operators in $L^{1}\mapsto L^{1,\infty}$ when $w$ is an $A_{1}$ weight is well known and goes back to the origins of the weighted theory with R. Hunt, B. Muckenhoupt and R. Wheeden \cite{MR0312139} in dimension $1$ and R. Coifman and C. Fefferman in higher dimensions \cite{MR0358205}.  
The novelty of \eqref{e.wMWC} resides with
 the linear dependence on $\lVert w \rVert_{A_{1}}$. Linear growth of the $A_{1}$ constant has been proven in the strong case for $p>1$, that is
$$
\norm T.L^{p}(w)\mapsto L^{p}(w).\leq C(p,n)\lVert w \rVert_{A_{1}},
$$
and this is sharp. The result was first proven by R. Fefferman and J. Pipher for $p\geq 2$ and $T$ a classical singular integral
operator in \cite{MR1439553}. Later on it was extended to $p>1$ and general Calder\'on-Zygmund operators by
A. Lerner, S. Ombrosy and C. P\'erez in \cite{MR2427454}. The proof of A. Lerner \emph{et al.}
provides not only  linear dependence on the $A_{1}$ constant, but also explicit
dependence of the operator norm on $p$. The explicit dependence on $p$  allows one to get the weak endpoint below,
$$
\sup_{t>0}t w\left(\left\{  x\in\mathbb R \mid |Tf(x)|>t\right\}\right)\le C \lVert w \rVert_{A_{1}}(1+\log\lVert w \rVert_{A_{1}})\int_{\mathbb R}|f|w(x)dx.
$$

We want to point out that even though this estimate is far from proving \eqref{e.wMWC}, it is the best result known up to this date. The Weak Muckenhoupt-Wheeden Conjecture remains open and we will not make any new contribution
to it in this paper, but we are hoping to shed some light in the understanding of these endpoint estimates. 


The paper is organized as follows. In Section 2 we introduce the necessary concepts.
Section 3 presents an important tool in the proof of the result, the corona decomposition associated to a measure $w$. We dedicate Section 4 to the inductive construction of measures $w$ and Haar multipliers $T$. Section 5 includes the proof of the main theorem. Finally, the last Section contains the bibliography. 

\begin{ack}
The author would like to thank her dissertation adviser, Michael Lacey, for his support and suggestions in the composition of this paper, and Carlos P\'erez for many interesting conversations on the topic.
\end{ack}

\section{Basic concepts} 

The space we will be working on is $\mathbb R$. Throughout the paper $|\cdot |$ will stand for the Lebesgue measure in $\mathbb R$, 
$1_{E}$ will be the characteristic function associated to the set $E\subset \mathbb R$,  and for   $ x\ge 0$, $[x]$ denotes the 
integer part. The letters $i,j,l,k$ will stand for positive integers. C will denote a universal constant, not necessarily the same in each case.

In the sequel when referring to $M$ we will understand the dyadic maximal function, i.e., for $f\in L^{1}_{loc}$
$$
Mf(x)=\sup_{Q\,\, dyadic}\frac{1_{Q}}{|Q|} \int_{Q} f(x)dx.
$$

We will use a different formulation of the two weight inequality \eqref{e.extrap}. This characterization was first introduced by E. Sawyer in \cite{MR676801} and have been used since then, becoming one of the standard approaches. The proof is a well known exercise that we are not including in this paper.

\begin{proposition}\label{p.dual} Let $w,v$ be two positive Borel measures, continuous with respect to Lebesgue measure and let $T$ be a sublinear operator. Let $C$ be a universal constant and $1<p<\infty$, the statements below are equivalent,
\begin{align}
\label{e.general} \norm Tf.L^{p}(w).& \leq  C\norm f.L^{p}(v).,\\
\label{e.Sawyer} \norm T(f\sigma).L^{p}(w).& \leq  C\norm f.L^{p}(\sigma).,\quad \sigma=v^{1-p'}1_{\textup{supp}(v)}.\\
\end{align}
\end{proposition}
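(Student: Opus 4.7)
The plan is to prove the equivalence by exhibiting that the substitution $f \mapsto f\sigma$ is an isometry between the relevant $L^p$ spaces on $\text{supp}(v)$. Once that isometry is in place, each direction of the equivalence is a one-line substitution in the inequality.

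The first step is the algebraic identity at the heart of the argument. Since $\sigma = v^{1-p'}\mathbf{1}_{\text{supp}(v)}$ and $p(1-p') + 1 = 1-p'$ (using $p' = p/(p-1)$), one has
$$
\sigma^{p}\, v \;=\; v^{p(1-p')+1} \mathbf{1}_{\text{supp}(v)} \;=\; v^{1-p'}\mathbf{1}_{\text{supp}(v)} \;=\; \sigma
$$
on $\text{supp}(v)$ (and both sides vanish elsewhere by the indicator). This immediately yields the isometry
$$
\int_{\mathbb{R}} |f|^{p}\, \sigma \;=\; \int_{\mathbb{R}} |f\sigma|^{p}\, v,
$$
which can be rewritten as $\|f\|_{L^{p}(\sigma)} = \|f\sigma\|_{L^{p}(v)}$. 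This is the only nontrivial computation in the proof.

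To get (\ref{e.Sawyer}) from (\ref{e.general}), apply (\ref{e.general}) with the test function $f\sigma$ in place of $f$; the left-hand side becomes $\|T(f\sigma)\|_{L^{p}(w)}$ and the right-hand side, by the isometry, becomes $C\|f\|_{L^{p}(\sigma)}$, which is exactly (\ref{e.Sawyer}). Conversely, given (\ref{e.Sawyer}) and any $f$ supported on $\text{supp}(v)$, substitute $g = fv^{p'-1}$ so that $g\sigma = f$; then (\ref{e.Sawyer}) applied to $g$ gives $\|Tf\|_{L^{p}(w)} = \|T(g\sigma)\|_{L^{p}(w)} \leq C\|g\|_{L^{p}(\sigma)} = C\|f\|_{L^{p}(v)}$ by the isometry.

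The main (and only) point that needs care is the behavior off $\text{supp}(v)$: for (\ref{e.Sawyer}) $\Rightarrow$ (\ref{e.general}) one must restrict attention to $f$ supported on $\text{supp}(v)$ (otherwise $\|f\|_{L^{p}(v)} = 0$ while $Tf$ need not vanish $w$-a.e.), which is harmless in the applications since one always decomposes $f$ accordingly and $\sigma$ is defined precisely to carry the indicator $\mathbf{1}_{\text{supp}(v)}$. This is why $\sigma$ is defined with the indicator built in. Everything else is a direct change of variables, and no further properties of $T$ beyond sublinearity are used.
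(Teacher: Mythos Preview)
Your argument is correct and is precisely the standard substitution proof: the identity $\sigma^{p}v=\sigma$ on $\textup{supp}(v)$ gives $\lVert f\rVert_{L^{p}(\sigma)}=\lVert f\sigma\rVert_{L^{p}(v)}$, and both implications follow by the change of variable $f\leftrightarrow f\sigma$, with the caveat on $\textup{supp}(v)$ handled exactly as you describe. The paper does not supply its own proof---it explicitly states that ``the proof is a well known exercise that we are not including in this paper''---so there is nothing to compare against; your write-up simply fills in that omitted exercise.
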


 \begin{remark}
This new formulation provides a more symmetric estimate for $T^{*}$, the dual operator with respect to Lebesgue measure, i.e., \eqref{e.Sawyer} is equivalent to 
\begin{equation}\label{e.dual}
\norm T^{*}(fw).L^{p'}(\sigma). \leq  C\norm f.L^{p'}(w).,
\end{equation}
where $\frac{1}{p}+\frac{1}{p'}=1$.
\end{remark} 

The estimate we want to disprove is a particular case of a 2-weighted inequality. We will work with weights $w\geq 0$, $w\in
L^{1}_{\textup{loc}}$ and $\sigma= \frac{w}{(Mw)^{2}}$, which is the dual measure of $v=\left( \frac{Mw}{w}\right)^2 w$. Throughout the
paper $\sigma$ will take the above form. The operators we consider are discrete dyadic
operators. Let us recall some of the basic concepts associated to them before getting to the precise definition.

\begin{definition}\label{d.haar} Let $\mathcal D$ be the usual dyadic grid in $\mathbf R$, namely \\
$\mathcal D = \{ \left.\left [ 2^{k}m, 2^{k}(m+1)\right.\right ),\,\,\,m, k \in \mathbf Z \}$. Let $I=[a,b]$ be an
interval in $\mathcal D$, then $I^{-}=[a, \frac{a+b}{2})$ is the \emph{left child of I}
and $I^{+}=[\frac{a+b}{2}, b)$ is the \emph{right child of I}.
We define the $L^{2}$-normalized Haar function associated to $I$, $h_{I}$ as
$$ h_{I}= \frac{1_{I^{+}}-1_{I^{-}}}{|I|^{1/2}}$$
\end{definition}

Our interest will lay on particular examples of dyadic operators, the Haar multipliers. 

\begin{definition} Let $\epsilon=(\epsilon_{I})_{I\in \mathcal D}$ be a bounded sequence. The operator $T_{\epsilon}$ is a \emph{Haar multiplier} associated to $\epsilon$ iff 
\begin{equation}\label{e.haarmultiplier}
 T _{\epsilon}  f = \sum _{I\in \mathcal D} 
 \epsilon_{I}\ip f, h_{I}, h_{I}.
\end{equation}
\end{definition}

\section{The Corona Decomposition}

This Section provides the tool to, given $w$, decompose measure $\sigma$. At the same time, the corona decomposition allows to group the dyadic intervals into families and consequently decompose any dyadic operator into a sum of operators. This was previously done in \cite{0906.1941}. 

\begin{definition}\label{d.linearizing} Let $ \mathcal D ' \subset  \mathcal D$ be any 
 collection of dyadic cubes.  Call $ (\mathcal L \;:\; \mathcal D' (L) ) $ 
a \emph{corona decomposition of $ \mathcal D '$ relative to measure $ w$} 
 if these conditions are met. 
Let $L, L' \in \mathcal L$,  we have 
\begin{equation}\label{e.lin1}
\frac {w (L')} {\lvert  L'\rvert } \geq 4 \frac {w (L)} {\lvert  L\rvert } \,,  
\qquad  L'\subsetneq L . 
\end{equation} 
Define $ \Gamma \;:\; \mathcal   D' \to \mathcal L $ by requiring that $ \Gamma (I) $ be the minimal element of 
$ \mathcal L$ that contains $ I$.  We set  $ \mathcal D' (L) := \{ I\in \mathcal D' \;:\; \Gamma (I)= L\}$. Then for all $I\in  \mathcal D' (L)$ we
 require 
\begin{equation}\label{e.lin2}
4\frac {w (L)} {\lvert  L\rvert } >   \frac {w (I)} {\lvert  I\rvert }  \,. 
\end{equation} 
\end{definition}

\begin{remark}
The corona decomposition is obtained by a stopping time argument. It is for this reason that we will refer to $\mathcal L$ as the \emph{stopping collection} in the corona decomposition.
\end{remark}	

The collections $ \mathcal D' (L)$ partition $ \mathcal D'$. Since decompositions of dyadic intervals correspond directly to decompositions of dyadic operators, we can write any Haar multiplier as
\begin{equation} \label{r.decompT}
T_{\epsilon}= \sum_{L\in \mathcal L} T_{L} \quad {\text{where}} \quad 
T_{L}= \sum_{I\in \mathcal D(L)} \epsilon_{I}\ip f, h_{I},  h_{I}.
\end{equation}

We now focus on the structure of the measure $\sigma$. We will denote $\mathcal L_{0}$ to be the set of maximal intervals in $\mathcal L$. In general, we denote $\mathcal L_{j}$ to be the maximal intervals on $ \mathcal L\backslash \bigcup_{i=0}^{j-1}\mathcal L_{i}$.

\begin{definition}
Let $E\in \mathcal D(L)$, $L\in \mathcal L_{j}$, we define the \emph{take away the children} operator on sets of $\mathbb R$ as 
$\displaystyle \Delta_{1} E= E\backslash \bigcup_{L'\in \mathcal L_{j+1},\, L'\subset E} L'$. In general, we define 
$$
\Delta_{l} E= \cup_{\tilde{L}\in \mathcal L_{j+l-1},\,\tilde{L}\subset E} \tilde{L} \backslash \cup_{L'\in \mathcal L_{j+l},\, L'\subset E} L'\quad \text{for } \, l> 1.
$$
\end{definition}

\begin{remark}\label{decompM}
 This last definition helps us track the value of $ M w$.   
Notice that for every $x \in \Delta_{l} E$, $E\in\mathcal D(L)$, and  $L\in\mathcal L$,   we have  
$\displaystyle 8^{l}\frac{w(L)}{|L|}\geq Mw(x)\geq 4^{l}\frac{w(L)}{|L|}$. Since $\{\Delta_{l} E \}_{l\geq 1}$ forms a partition of $E$, we can estimate 
\begin{equation}\label{decompsigma}
\sigma(E)=\int_{E}\frac{w}{Mw^{2}}(x)dx \geq \sum^{\infty}_{l=1} 8^{-2l}\left(\frac{|L|}{w(L)}\right)^{2}w(\Delta_{l}E)
\end{equation}
\end{remark}

\section{The inductive construction} 

In this Section, we describe the inductive procedure that will provide measures $w_{k}$ and operators $T_{k}$, the key elements in proving Theorem \ref{t.main}. 
We start with a few definitions associated to the base case.

\begin{definition}\label{d.jumpingpoint}
Let $J=\left.\left [ a, a+\alpha\right)\right.$ be a dyadic interval, we define the \emph{jumping point} of $J$, and we denote it by $\textup{jp}(J)$, as
$\textup{jp}(J):=a+\frac{\alpha}{3}$. We also denote the right end point of $J$ by $\textup{rep}(J):=a+\alpha$. 
\end{definition}

Notice that the ``jumping point'' divides the interval into two intervals of lengths one-third and two-thirds the length 
of the original interval, thus  
  the ``jumping point'' has  
 a periodic binary expansion, which fact is important to the construction. 
We now define the measure that gives name to the jumping point.
\begin{definition}\label{d.measure}
Let $J$ be a dyadic interval and $\lambda>0$ be a height, we define the measure associated to $J$ with height $\lambda$ as
$$
\mu_{J}^{\lambda}= \lambda 1_{\left.\left [ \textup{jp}(J), \textup{rep}(J)\right)\right.}.
$$
\end{definition}

Having  listed the key ingredients to construct our measure $w_{k}$ inductively, we now focus on those associated to the construction of the operator $ T^{k}$.

\begin{definition}\label{d.haarmult}    Let $k\geq 1$ be a fixed integer.  
Let $J$ be a dyadic interval, we define $\Xi_{J}$  as the following collection of intervals associated to $J$,
\begin{equation}\label{e.int}
\Xi_{J}:=\left \{ J=I_{0}\supseteq \ldots \supseteq I_{2k} \mid \textup{jp}(J)\in I_{i}^{-}, \, |I_{i}|=4|I_{i+1}| \, \right\}.
\end{equation}
We denote $I(J):=I_{2k}$, the minimal interval in the collection $\Xi_{J}$. And we define the collection of the right children of the intervals in $\Xi_{J}$ as
\begin{equation}\label{e.int+}
\Xi_{J}^{+}:=\left \{ I_{i}^{+} \mid I_{i}\in \Xi_{J} \setminus I(J)\right\}. 
\end{equation}
We are now ready to define the Haar multiplier associated to $J$ with sign $r_J$, $S_{J, r(J)}$, as
\begin{equation}\label{e.haarmul}
S_{J, r(J)}(f):=r_{J}\sum_{I\in \Xi_{J}} \langle f, h_{I} \rangle h_{I}. 
\end{equation}
where $r_{J}\in \{+1,-1\}$.
\end{definition}

\begin{remark}
There are two ideas about the jumping point that should be clarified. (1) If $\textup{jp}(J)\in I$, then either
$\textup{jp}(J)\in I^{-}$ or $\textup{jp}(J)\in I^{+}$. Moreover, these two events alternate, i.e., let $I\subset I'\subset J$ with $|I|=\tfrac12|I'|$ and $\textup{jp}(J)\in I, I'$. 
We have that if $\textup{jp}(J)\in I'^{-}$ (respectively $I'^{+}$) then $\textup{jp}(J)\in I^{+}$ (respectively $I^{-}$).
This phenomenon explains that the chosen intervals in \eqref{e.int} satisfy $|I_{i}|=4|I_{i+1}|$. 
 (This is the consequence of the binary expansion of $ 1/3$.) 
(2) We take advantage of the localization of the jumping point in another way. The intervals $I\in \Xi_{J}^{+}$ ``almost'' form a partition of
the support of $\mu_{J}^{\lambda}$. If $k\rightarrow\infty$, they will actually form a partition,  since  we consider only a fixed number of them, we can only get
\begin{equation}\label{e.supp}
{\text{ supp}} \mu_{J}^{\lambda}= \left[\left. \textup{jp}(J), \textup{rep}(I(J))\right)\right.\cup\bigcup_{I\in \Xi_{J}^{+}} I.
\end{equation}

\end{remark}

\begin{remark}
Notice that $\Xi_{J}$ and consequently $\Xi_{J}^{+}$ and $S_{J, r(J)}$ depend on the parameter $k$, that will play the role of the induction index in the proof of Proposition \ref{e.mainestimate}. For the sake of simplicity, we omit the parameter $k$ in the notation of those objects.
\end{remark}


 The following lemma takes advantage of the lack of cancellation in $S_{J, r(J)}(\mu_{J}^{\lambda})$ to compare the $\mu_{J}^{\lambda}$ measure of a  level set associated to $S_{J, r(J)}$ with that of $J$. 

\begin{lemma}\label{l.distestimate}
Let $k\geq 1$ be a fixed integer, $J$ be a dyadic interval and $\mu_{J}^{\lambda}$ and $S_{J, r(J)}$ as above. Then, 
first the inner products $\langle \mu_{J}^{\lambda}, h_{I_{i}}\rangle $, for $ I_i\in \Xi_J$ depend only on the 
numbers $ \mu _{J} ^{\lambda } (I (J))$ and $ \{ \mu _J ^{\lambda } (I) \;:\; I\in \Xi _{J} ^{+}\}$.  Second, 
\begin{equation}\label{e.distestimate}
\mu_{J}^{\lambda}\left(\left \{ x \mid |S_{J, r(J)}\mu_{J}^{\lambda}(x)|> k\frac{\mu_{J}^{\lambda}(J)}{|J|}\right \}\right)\geq \frac{1}{4}2^{-4k}\mu_{J}^{\lambda}(J).
\end{equation}
\end{lemma}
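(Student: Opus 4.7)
The plan hinges on one geometric observation: the periodic binary expansion of $1/3$ forces $\textup{jp}(J)$ to sit exactly one-third of the way from the left endpoint of every interval $I_i$ in the chain $\Xi_J$; this is precisely why the quarter-scale condition $|I_i|=4|I_{i+1}|$ is imposed in Definition \ref{d.haarmult}. Using this, direct computation gives
\[
\mu_J^\lambda(I_i) = \tfrac{2}{3}\lambda|I_i|, \qquad \mu_J^\lambda(I_i^+) = \tfrac{1}{2}\lambda|I_i|, \qquad \mu_J^\lambda(I_i^-) = \tfrac{1}{6}\lambda|I_i|,
\]
and in particular every Haar coefficient takes the explicit value $\langle \mu_J^\lambda, h_{I_i}\rangle = \tfrac{1}{3}\lambda|I_i|^{1/2}$.

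For the first assertion, I would note that for $i<2k$ the set $I_i^-\setminus I_{i+1}$ equals the leftmost quarter of $I_i$; since $\textup{jp}(J)$ sits one-third of the way into $I_i$, this quarter lies strictly to its left and carries no $\mu_J^\lambda$-mass. Hence $\mu_J^\lambda(I_i^-)=\mu_J^\lambda(I_{i+1})$, and iterating produces
\[
\mu_J^\lambda(I_i^-)=\sum_{j=i+1}^{2k-1}\mu_J^\lambda(I_j^+)+\mu_J^\lambda(I(J)),
\]
a sum drawn entirely from the allowed list. For $i=2k$ one writes $\mu_J^\lambda(I(J)^-)=\mu_J^\lambda(I(J))-\mu_J^\lambda(I(J)^+)$; the value $\lambda$, and hence $\mu_J^\lambda(I(J)^+)=\tfrac{1}{2}\lambda|I(J)|$, is recovered from $\mu_J^\lambda(I_0^+)\in\Xi_J^+$. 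Plugging into $\langle\mu_J^\lambda,h_{I_i}\rangle=(\mu_J^\lambda(I_i^+)-\mu_J^\lambda(I_i^-))/|I_i|^{1/2}$ settles the first assertion.

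For the distributional estimate I would localize to the set
\[
E := I(J)^-\cap\textup{supp}(\mu_J^\lambda),
\]
i.e.\ the part of the left child of $I(J)$ lying to the right of $\textup{jp}(J)$. The alternation noted in the remark after Definition \ref{d.haarmult} forces $I_{j+1}\subset I_j^-$, so in particular $I(J)\subset I_j^-$ for every $j<2k$; together with $E\subset I(J)^-$ this means that every $x\in E$ lies in $I_j^-$ for all $j\in\{0,\dots,2k\}$. Consequently $h_{I_j}(x)=-|I_j|^{-1/2}$ for each such $j$, and the coefficient computation above yields
\[
S_{J,r(J)}\mu_J^\lambda(x) \;=\; -r_J\sum_{j=0}^{2k}\tfrac{\lambda}{3} \;=\; -r_J(2k+1)\tfrac{\lambda}{3},
\]
which strictly exceeds the threshold $k\mu_J^\lambda(J)/|J|=2k\lambda/3$. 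Finally $\mu_J^\lambda(E)=\lambda|I(J)|/6$, and combining $|I(J)|=4^{-2k}|J|$ with $\mu_J^\lambda(J)=2\lambda|J|/3$ gives $\mu_J^\lambda(E)=\tfrac{1}{4}\,2^{-4k}\,\mu_J^\lambda(J)$, matching \eqref{e.distestimate} exactly.

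The only really delicate point is the geometric bookkeeping, namely verifying the alternation property and the fact that each $I_i^-\setminus I_{i+1}$ sits to the left of $\textup{jp}(J)$. Once that is in hand, all $2k+1$ Haar functions contribute with the same sign on $E$, and the lemma reduces to summing $2k+1$ equal constants of size $\lambda/3$.
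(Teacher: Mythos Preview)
Your proof is correct and takes essentially the same route as the paper: compute $\langle\mu_J^\lambda,h_{I_i}\rangle/|I_i|^{1/2}=\lambda/3$, observe that on $I(J)^-$ all $2k+1$ Haar terms align with the same sign to give $(2k+1)\lambda/3>2k\lambda/3$, and then read off $\mu_J^\lambda(I(J)^-)=\tfrac14\,2^{-4k}\mu_J^\lambda(J)$. Your geometric bookkeeping (identifying $I_{i+1}=I_i^{-+}$ and checking that the leftmost quarter $I_i^{--}$ carries no mass) is more explicit than the paper's brief appeal to \eqref{e.supp}, and your separate treatment of the $i=2k$ case in the first assertion is in fact more careful than the paper's.
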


\begin{proof}
For the first claim, $ \langle \mu_{J}^{\lambda}, h_{I_{i}}\rangle $ depends 
only on the measure $ \mu _{J} ^{\lambda }$ assigned to the two children for $ I_i$.  And, these 
two children are unions of the sets in \eqref{e.supp}.

Turning to the second claim, 
let $J$ be a dyadic interval, for any $I_{i}\in \Xi_{J}$ we have the following   equality 
\begin{equation}\label{e.onethird}
\frac{\langle \mu_{J}^{\lambda}, h_{I_{i}}\rangle}{\sqrt{|I_{i}|}}= \frac{1/2\lambda |I_{i}|- \left(1/2-1/3\right)\lambda |I_{i}|}{|I_{i}|}= \frac{\lambda}{3}.
\end{equation}

Since $I_{i+1}\subset I_{i}^{-}$ for all $i$, $\langle \mu_{J}^{\lambda}, h_{I_{i}}\rangle h_{I_{i}}$ is constant on $I_{i+1}$. Therefore, using \eqref{e.onethird} for every $x\in I(J)^{-}$
$$  
|S_{J, r(J)}\mu_{J}^{\lambda}(x)|=\sum_{i=0}^{2k} \frac{\langle \mu_{J}^{\lambda}, h_{I_{i}}\rangle}{\sqrt{|I_{i}|}}1_{I(J)^{-}}= \frac{(2k+1)\lambda}{3}.
$$
On the other hand, 
$$
\frac{\mu_{J}^{\lambda}(J)}{|J|}= \frac{\frac23 \lambda |J|}{|J|}= \tfrac{2}{3}\lambda,
$$
and $\displaystyle \frac{(2k+1)\lambda}{3}> \frac{2}{3}k\lambda$ trivially. 
This added to the fact that $|S_{J, r(J)}\mu_{J}^{\lambda}(x)|\leq \frac{2}{3}k\lambda$ for all $x\in J$ and $x\notin I(L)^{-}$ gives,
\begin{equation}\label{e.mineqlevelset}
I(L)^{-}= \left \{ x \mid |S_{J, r(J)}\mu_{J}^{\lambda}(x)|> k\frac{\mu_{J}^{\lambda}(J)}{|J|}\right \}, 
\end{equation}
and    
$$
\mu_{J}^{\lambda}\left(\left \{ x \mid |S_{J, r(J)}\mu_{J}^{\lambda}(x)|> k\frac{\mu_{J}^{\lambda}(J)}{|J|}\right \}\right)\geq \mu_{J}^{\lambda}(I(J)^{-})=
\tfrac{1}{6}\lambda 2^{-4k}|J|=\tfrac{1}{4}2^{-4k}\mu_{J}^{\lambda}(J),
$$
as desired.
\end{proof}

Actually this estimate \eqref{e.distestimate} is unimprovable,  as follows from the John-Nirenberg 
inequality.  (Our point of view in this construction is informed by the extension of the 
John-Nirenberg inequality in the weighted setting, as established in the 
work of the author with M. Lacey and S. Petermichl \cite{0906.1941}*{page 137}.)   

We have shown that we can construct a particular Haar multiplier, that with 
respect to Lebesgue measure has no cancellation, and we have reversed the  John-Nirenberg inequality. 
The success of this proof is based upon the observation 
that we can iterate this construction on the elements of the partition in \eqref{e.supp}.  
Namely, we are free to change the measure $ \mu _{J} ^{\lambda }$ provided we \emph{do not 
change the numbers}  $ \mu _{J} ^{\lambda } (I)$, for $ I\in \Xi _{J} ^{+}$.  
And so, we will change the definition of $  \mu _{J} ^{\lambda } $, without changing its total 
measure, in such a way that we carefully track the corona, so that we have \eqref{decompsigma}. 
This means that at a different threshold, and a different part of our Haar multiplier, we will have 
a second reversal of the John-Nirenberg inequality.  This construction will then have to be iterated 
many times, to overcome the exponential nature of the John-Nirenberg inequality.   
All of these considerations are incorporated into this proposition.

\begin{proposition}\label{p.mainestimate}
Let $k\geq 1$ be a fixed integer. There exist a family of random Haar multipliers $T^{k}$ and a weight $w_{k} \not\equiv 0$, $w_{k}\in L_{loc}^{1}$ such that 
\begin{equation}\label{e.mainestimate}
\sum_{L\in \mathcal L, L\subset [0,1]} w_{k}\left(\left\{ x\mid |T_{L}^{k}w_{k}(x)|>k\frac{w_{k}(L)}{|L|} \right\}\right ) \geq c w_{k}([0,1]),
\end{equation}
where $\mathcal L$ is the stopping collection in the corona decomposition associated to measure $w_{k}$ as defined in  \eqref{d.linearizing} and $c=1/6$.
\end{proposition}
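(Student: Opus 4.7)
The plan is to build $w_k$ and $T^k$ by iterating the single-scale construction of Lemma~\ref{l.distestimate} down a tree of dyadic intervals rooted at $J_0:=[0,1]$, tagging each node with an independent random sign. Set $w^{(0)}:=\mu_{J_0}^{1}$ and initialize $T^k := S_{J_0,r(J_0)}$. Inductively, for every already visited $J$ and every $I\in \Xi_J^+$, replace the currently constant restriction $\lambda\cdot 1_I$ by $\mu_I^{3\lambda/2}$ and append $S_{I,r(I)}$ to $T^k$; this preserves the total mass $w_k(I)=\lambda|I|$ while installing a new jumping-point profile on $I$. I would iterate $N$ times, with $N$ of order $4^{2k}$.

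Two structural facts drive the analysis. First, the collections $\Xi_J$ for different visited $J$ are pairwise disjoint, since the nesting $I_{i+1}\subset I_i^-$ confines each chain to the left half of its parent, so $T^k$ is a bona fide Haar multiplier with coefficients in $\{-1,0,1\}$. Second, on every $I\in \Xi_J^+$ the mass $w_k(I) = \lambda_J|I|$ is preserved by later iterations, and $w_k(I(J))$ is untouched because $I(J)\notin \Xi_J^+$; hence by the first claim of Lemma~\ref{l.distestimate} the Haar coefficients satisfy $\langle w_k, h_{I_i}\rangle = \langle \mu_J^{\lambda_J}, h_{I_i}\rangle = \sqrt{|I_i|}\,\lambda_J/3$ for $I_i\in \Xi_J$. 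Because each iteration step raises the local average by exactly $3/2$, the corona decomposition $\mathcal L$ of $w_k$ stops every four iteration levels (since $(3/2)^4>4>(3/2)^3$). For any $L\in \mathcal L$, the restriction $T_L^k$ decomposes as $S_{L,r(L)}$ plus finitely many $S_{J,r(J)}$ with $J\subsetneq L$ at intermediate depths, but each such intermediate term is supported inside some $I\in \Xi_L^+$ which is disjoint from $I(L)^-$. Thus on $I(L)^-$ the intermediate terms vanish, $|T_L^k w_k(x)| = |S_{L,r(L)}\mu_L^{\lambda_L}(x)| = (2k+1)\lambda_L/3 > k\,w_k(L)/|L|$, and $w_k(I(L)^-) = \tfrac14\cdot 2^{-4k}\,w_k(L)$ by Lemma~\ref{l.distestimate}.

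Finally I sum over $L\in \mathcal L$. At each iteration step a fraction $4^{-2k}$ of the current live mass leaks into the non-iterated residual $[\textup{jp}(J),\textup{rep}(I(J)))$, so $\sum_{L\in \mathcal L_m}w_k(L) = (1-4^{-2k})^{4m}w_k(J_0)$ and $\sum_{m\geq 0}(1-4^{-2k})^{4m}\sim 4^{2k-1}$. This already gives the basic lower bound
\begin{equation*}
\sum_{L\in \mathcal L,\,L\subset[0,1]} w_k\bigl(\bigl\{x\mid |T_L^k w_k(x)|>k\,w_k(L)/|L|\bigr\}\bigr)\geq \tfrac14\cdot 2^{-4k}\cdot 4^{2k-1}\,w_k(J_0)=\tfrac{1}{16}\,w_k(J_0).
\end{equation*}
Upgrading the constant to $c=1/6$ further harvests the good points inside intermediate iteration intervals $J\subsetneq L$ of depth $4m+\ell$, $\ell\in\{1,2,3\}$, where $|S_{J,r(J)}w_k|$ dominates the threshold by a factor of order $(3/2)^\ell$ and the smaller $|S_{L,r(L)}w_k|$ contribution can be absorbed for an average choice of the random signs $r_J$. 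This last careful book-keeping of scale-interactions, where the randomness in the signs is essential, is the main obstacle.
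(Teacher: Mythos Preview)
Your construction differs from the paper's in one key respect: you recurse directly on each $I\in\Xi_J^+$ with height multiplier $3/2$, whereas the paper recurses on the left-left grandchild $L'(I):=I^{--}$ with height multiplier $6$ (see Definition~\ref{d.levelcorona} and Remark~\ref{r.}). The paper's choice is made precisely so that the corona decomposition aligns \emph{exactly} with the iteration: each recursion step produces one new corona level, the stopping collection is $\mathcal L=\bigcup_j\mathcal L_j$, and consequently $T_L^k=S_{L,r(L)}$ with no intermediate cross-terms whatsoever. The analysis is then a direct application of Lemma~\ref{l.distestimate} and the Measure Preserving Lemma, and the constant $c=1/6$ drops out cleanly. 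The paper also notes explicitly that the signs $r(L)$ are irrelevant for this proposition; randomness enters only later, via Khintchine, in the proof of the Main Lemma.

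Your route does give a valid lower bound of order $1/16$, and that would already suffice for Lemma~\ref{l.main} and hence for Theorem~\ref{t.main}, so the strategy is sound. But the proposed ``upgrade to $1/6$'' by harvesting the sets $I(J)^-$ at intermediate depths $4m+\ell$ is where the argument breaks. On such a set, $T_L^k w_k$ carries contributions from every $S_{J',r(J')}$ with $L\supseteq J'\supsetneq J$, and on $I(J)^-$ one computes $S_{J',r(J')}w_k=r(J')(1-i)\lambda_{J'}/3$ where $J\subset I_i^+\in\Xi_{J'}^+$; since $i$ ranges up to $2k-1$, summing these ancestor terms gives a quantity of size comparable to (indeed potentially larger than) the main term $(2k+1)\lambda_J/3$, so cancellation is a genuine obstruction, not something random signs simply average away. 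There is also a minor slip in your corona identification: not every stopping interval is a level-$4m$ iteration interval (for instance $I(J)^+$ for $J$ at level $4m+3$ has average $\lambda_{4m+3}>4\,w_k(L)/|L|$ and is a stopping interval carrying no operator), though this is harmless for the lower bound. The cleanest repair is simply to adopt the paper's $I^{--}$ recursion, which removes the intermediate levels and the cross-terms entirely.
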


For the proof we need this definition.

\begin{definition}\label{d.levelcorona}
Let $k\geq 1$ be an integer, $J$ be a dyadic interval and $\Xi_{J}$ be as above. We define the set of intervals $\mathcal L(J)$ as
$$
\mathcal L(J):=\left\{ L'(I):= I^{--} \mid I\in \Xi_{J}^{+}\right\}
$$
\end{definition}

Notice that the map 
\begin{align}\label{e.bijection}
\Psi\;:\;\, &\Xi_{J}^{+}  \longmapsto  \mathcal L(J)\\
&I \,\,\,\,\, \longmapsto L'(I) \notag
\end{align} 
is a bijection and $\displaystyle |L'(I)|=\tfrac{1}{4}|I|$. Moreover, given $J$ dyadic interval, 
\begin{equation}\label{e.disjointness}
I(J)\cap L'=\emptyset \quad \text{ for all } \, L'\in \mathcal L(J).
\end{equation}

\begin{remark}\label{r.}   The passage to the `left-left child' above helps us 
keep track of the corona.  We will rescale all of the measure assigned to $ I\in \Xi _J ^{+}$ 
to `right two-thirds' of $ I ^{--}$.  Now, $ \tfrac 14 \cdot \tfrac 23 = \tfrac 16$, so 
to preserve measure, we will need to multiply by $ 6$.  This explains the powers of $ 6$ 
that appear below. 
\end{remark}

\begin{proof} The proof follows from an
inductive procedure. Let $\mathcal L_{0}$, $\mu_{0}$ and $S^{0}$ be as follows, $\mathcal L_{0}=\{[0,1)\}$, $\mu_{0}:=\mu_{[0,1)}^{1}$ and $S^{0}:=S_{[0,1), r[0,1)}$. For a picture of the first stage see figure \ref{f1.} below. In general, for every $j\geq 1$ we define
\begin{eqnarray*}
\mathcal L_{j}&=& \bigcup_{L\in \mathcal L_{j-1}}\mathcal L (L)\\
\mu_{j}&=& \sum_{i=0}^{j-1} \sum_{L\in\mathcal L_{i}}\mu_{I(L)}^{6^{i}} + \sum_{L'\in\mathcal L_{j}} \mu_{L'}^{6^{j}}\\
S^{j}&=& S^{j-1}+ \sum_{L'\in\mathcal L_{j}}S_{L', r(L')}.
\end{eqnarray*}
For the proof of the proposition the selection of signs $r(L)$ is irrelevant. 
See figure \ref{f2.} for a descriptive drawing of the second stage of the construction.

\vspace{0.1in}

\begin{figure}[H]
\begin{tikzpicture}  
\draw[->]  (-.5,0) -- (7,0);  \draw[->](0,-.5) -- (0,1.5); 
\draw (.1,1) -- (-.1,1) node[left] {1};
\draw[thick] (2,.1) -- (2,-.1) node[below] {jp};
\draw (6,.2) -- (6,-.2) node[below] {1}; 
 \foreach \x in {1.875,2.25,3,6}  { \draw (\x,.2) -- (\x,-.2);}
\draw (2.1,.3) node {$ I_3$};
\foreach\a/\b in {2/2.65,1/4.5} {\draw (\b,.3) node {$ I^{+}_\a$};}
 \draw[thick] (2,1) -- (6,1);   \draw[thick] (0,0) -- (2,0);  
\end{tikzpicture} 
\caption{The first stage of the construction. The measure is held constant to the left 
of the jumping point, and on the interval $ I_3$. It recurses on the positive half of the other intervals: $ I_1^{+}, I_2^{+}$.}
\label{f1.}
\end{figure}
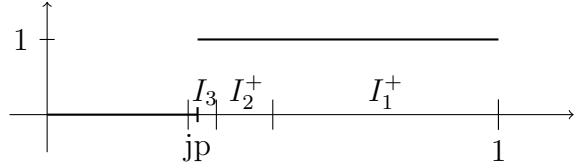

\begin{figure}[H]
\begin{tikzpicture}  
\draw[->]  (-.5,0) -- (7,0);  \draw[->](0,-.5) -- (0,2.5); 
\draw (6,.2) -- (6,-.2) node[below] {1}; 
\draw (.1,1) -- (-.1,1) node[left] {1}; \draw (.1,2) -- (-.1,2) node[left] {6}; 
\draw (.1,2) -- (-.1,2) node[left] {6};
 \foreach \x in {1.875,2.25,3,6}  { \draw (\x,.2) -- (\x,-.2);}
 
 \foreach \x  in {2,2.3125,2.4375,3.25,3.75} 
{ \draw[thick] (\x,.1) -- (\x,-.1);   } 

\draw[thick] (0,0) -- (2,0);   \draw[thick] (2,1) -- (2.25,1); 
\foreach \a/\b in {2.3125/2.4375,3.25/3.75}
{\draw[thick] (\a,2) -- (\b,2);} 

\draw[decorate,decoration=brace]  (3, .3) -- (3.75,.3) node[above] {$ I_1 ^{+--} $};

\end{tikzpicture} 

\caption{The second stage of the construction. Note that the horizontal scale is logarithmic. We repeat the first stage on the most left dyadic grandchild of each $I_{i}^{+}$, we jump to height $6$ so that the total measure of $I_{i}^{+}$ is preserved.}
\label{f2.}
\end{figure}
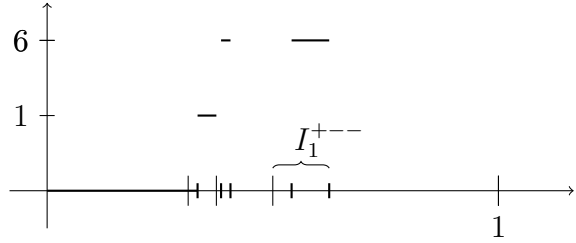


The next lemma states the main properties of the construction, namely, that the support of the measure built at each stage has shrunk with
respect to the previous one. At the same time the new measure preserves the total measure and the measure of the intervals playing a role in previous
stages. It is for this reason that we will refer to it as the `measure preserving lemma'.  Its proof is postponed until Section 5.

\begin{lemma}(Measure Preserving Lemma)\label{l.nose}
Let $\mathcal L_{j}$, $\mu_{j}$ and $S^{j}$ as above for any $j\geq 1$. Let $L\in \mathcal L_{j}$, we have the following estimates
\begin{enumerate}
\item $\sum_{L\in \mathcal L_{j}}|L |\le \frac{(1-2^{-4k})^{j}}{6^{j}}$,
\item $I(L)\cap L'=\emptyset$ for all $L'\in \mathcal L_{i}\,\, , L'\neq L\,\, ,  i\geq 1$. In particular $I(L)\cap I(L')=\emptyset$,
\item $\displaystyle \mu_{j+1}|_{[0,1]\backslash \cup_{L\in L_{j}} L}=\mu_{j}|_{[0,1]\backslash \cup_{L\in L_{j}} L}$,
\item $\mu_{j+1}(I)=\mu_{j+1}(L'(I))=\mu_{j}(I)$ for all $I\in \Xi^{+}_{L}$,
\item $\mu_{i}(I(L))=\mu_{j}(I(L))$ for all $i\geq j$,
\item $\mu_{i}(L)=\mu_{j}(L) \quad {\text{for all }} \quad i\geq j$,
\item $\mu_{i}\left(\left\{ |S_{L, r(L)}\mu_{i}(x)|>k\frac{\mu_{i}(L)}{|L|}\right\}\right)= \mu_{j}\left(\left\{ |S_{L, r(L)}\mu_{j}(x)|>k\frac{\mu_{j}(L)}{|L|}\right\}\right) \quad {\text{for all }} \quad i\geq j$.
\end{enumerate}
where in the last line $ S_{L,r(L)}$ is as in \eqref{e.haarmul}. 
\end{lemma}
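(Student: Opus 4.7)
The plan is to prove all seven items simultaneously by induction on $j$, working from the local formula
\[
\mu_{j+1}|_L \;=\; \mu_{I(L)}^{6^j} \;+\; \sum_{L' \in \mathcal L(L)} \mu_{L'}^{6^{j+1}}, \qquad L \in \mathcal L_j,
\]
which is visible directly from the definition of $\mu_{j+1}$ once one notices, via the inductive form of (2), that $\mu_{I(L')}^{6^i}|_L = 0$ for every ancestor $L' \in \mathcal L_i$ with $i < j$. The formula instantly gives (3), since $\mu_{j+1}-\mu_j$ is then supported in $\bigcup_{L\in \mathcal L_j}L$, and it also localizes every remaining argument to a single $L \in \mathcal L_j$.

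The combinatorial items (1) and (2) I would handle first. Item (1) is a geometric series: $|L'(I_i^+)| = \tfrac18 \cdot 4^{-i}|L|$ for $i=0,\ldots,2k-1$, which sums to $\tfrac{1-2^{-4k}}{6}|L|$, and iterating the bound across stages yields the claim. For (2), the base case $I(L) \cap L' = \emptyset$ for $L' \in \mathcal L(L)$ is precisely \eqref{e.disjointness}; any later-generation $L'' \subsetneq L$ with $L'' \in \mathcal L_i$, $i > j$, is contained in some $L' \in \mathcal L(L)$ and therefore also avoids $I(L)$, while any $L''$ not inside $L$ is disjoint from $L$ altogether. (I read the claim as implicitly excluding the ancestor case $L' \supsetneq L$, for which it is literally false.)

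The geometric heart of the measure items is that the jumping point is preserved along the chain: $\textup{jp}(I_i) = \textup{jp}(J)$ for every $I_i \in \Xi_J$. This follows from $|I_{i+1}|=\tfrac14|I_i|$ and $\textup{jp}(J) \in I_{i+1}^-$ by a one-line computation, and is exactly the periodic binary expansion of $\tfrac13$ flagged in the earlier remarks. Consequently $\mu_L^{6^j}$ and $\mu_{I(L)}^{6^j}$ agree as measures on $I(L)$, which combined with (2) gives (5), since no later construction modifies $\mu$ on $I(L)$. For (4), on $I \in \Xi_L^+$ the only contributing summand is $\mu_{L'(I)}^{6^{j+1}}$, whose total mass $\tfrac23 \cdot 6^{j+1} \cdot \tfrac14|I| = 6^j|I|$ equals $\mu_j(I)$ — the factor-of-$6$ mass balance flagged in the earlier remarks. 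Adding the two pieces on $L$ via $4^{-2k} + (1-4^{-2k}) = 1$ gives (6) at one step, and later stages reduce to the inductive hypothesis inside each $L' \in \mathcal L(L)$.

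For (7) I would invoke the first claim of Lemma \ref{l.distestimate}: since $\langle \mu, h_I \rangle$ for $I \in \Xi_L$ depends only on $\mu(I(L))$ and $\{\mu(I') : I'\in \Xi_L^+\}$, items (4) and (5) force $S_{L,r(L)}\mu_i \equiv S_{L,r(L)}\mu_j$, so the two level sets coincide; by \eqref{e.mineqlevelset} the common set is $I(L)^- \subset I(L)$, and the two measures agree on it by (5). The main difficulty I anticipate is less any single calculation than keeping the bookkeeping honest in (2) and (5): one must set up an invariant strong enough to propagate cleanly through the recursive definition of $\mathcal L_{j+1}$, and verify at every stage that each new $L' \in \mathcal L_{j+1}$ sits in the support of the measure being replaced, so that the factor-of-$6$ balance can be applied without having to track where $\mu_j$ actually lives.
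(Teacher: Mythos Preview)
Your proposal is correct and follows essentially the same approach as the paper: both proceed item by item via backward induction, with the same factor-of-$6$ mass balance for (4)--(6), the same disjointness argument from \eqref{e.disjointness} for (2), and the same verification that $S_{L,r(L)}\mu_{j+1}=S_{L,r(L)}\mu_j$ for (7). Your explicit jumping-point invariance $\textup{jp}(I_i)=\textup{jp}(J)$ and your appeal to the first claim of Lemma~\ref{l.distestimate} are minor presentational variants of the paper's direct computations, and your reading of (2) as excluding the ancestor case is correct (the paper's own proof handles only $L\cap L'=\emptyset$ and $L'\subsetneq L$).
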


We are now ready to prove the proposition. Let $M=M(k)>0$ be an integer that will depend on $k$ and will be chosen later. Let $T^{k}:=S^{M}$ and $w_{k}:=\mu_{M}$. We claim that
the corona decomposition of $\mathcal D'$ associated to $\mu_{l}$ is $\displaystyle (\mathcal L: \mathcal D'(L))$ where $\mathcal L=\bigcup_{i=0}^{l}
\mathcal L_{i}$ and $\mathcal D'$ is the set of dyadic intervals contained in $[0,1)$.  
The first stopping interval in the corona decomposition associated to $\mu_{0}$ is $[0,1)$. It is easy to see that this is actually the only stopping interval, therefore $\mathcal L_{0}$ is the stopping collection in the corona decomposition associated to $\mu_{0}$ and the claim is true in this case. The two facts that allow us to conclude the claim in general are: (a) parts (3) and (6) of Lemma \ref{l.nose}, which let us keep the
corona of the measure $\mu_{j}$ when we move to stage $j+1$. 
Using backwards induction we have proved $\bigcup_{i=0}^{l-1}\mathcal L_{i}$ forms part of the stopping intervals associated to $\mu_{l}$.

(b) For the next stage of the corona: due to parts (4) and (5) of Lemma \ref{l.nose} and \eqref{e.supp}, it is enough to consider the
children of $I\in \Xi_{L}^{+}$ for all $L\in \mathcal L_{l-1}$. We have these relations: \begin{eqnarray*} \mu_{l}(I^{+})&=&0\\
\frac{\mu_{l}(I^{-})}{|I^{-}|}&<& 4 \frac{\mu_{l}(L)}{|L|}\\ \mu_{l}(I^{-+})&=&0\\ \frac{\mu_{l}(I^{--})}{|I^{--}|}&\geq& 4
\frac{\mu_{l}(L)}{|L|}\\ \frac{\mu_{l}(J)}{|J|}&<& 4 \frac{\mu_{l}(I^{--})}{|I^{--}|}, \quad \text{for all } J\subset I^{--} \end{eqnarray*}
which describe the new level of stopping intervals as $\left \{L'(I) \mid I\in\Xi_{L}^{+},\,\, L\in \mathcal L_{l-1}\right\}$. It is easy to
check that this is the last stage. Herein we have proved that the corona decomposition associated to $w_{k}$ is $\displaystyle (\mathcal L:
\mathcal D'(L))$ with $\mathcal L=\bigcup_{i=0}^{M}\mathcal L_{i}$. 

\smallskip 

We now decompose the operator as described in \eqref{r.decompT}, $T^{k}= \sum_{L\in \mathcal L}T_{L}^{k}$.  Notice that $T_{L}^{k}=S_{L, r(L)}$ as  defined in \eqref{e.haarmul}. We finish the proof using parts (4), (6) and (7) of Lemma \ref{l.nose} together with Lemma \ref{l.distestimate}.
\begin{eqnarray*}
\sum_{\substack{L\in \mathcal L\\ L\subset [0,1]}} w_{k}\left(\left\{ x\mid |T_{L}^{k}w_{k}(x)|>k\frac{w(L)}{|L|} \right\}\right )&= &\sum_{i=0}^{M}\sum_{L\in \mathcal L_{i}}w_{k}\left(\left\{|S_{L, r(L)}w_{k}(x)|>k\frac{w_{k}(L)}{|L|}\right\}\right)\\
&=& \sum_{i=0}^{M}\sum_{L\in \mathcal L_{i}}\mu_{i}\left(\left\{|S_{L, r(L)}\mu_{i}(x)|>k\frac{\mu_{i}(L)}{|L|}\right\}\right)\\
& = &\sum_{i=0}^{M}\sum_{L\in \mathcal L_{i}}\mu_{L}^{6^{i}}\left(\left\{|S_{L, r(L)}\mu_{L}^{6^{i}}(x)|>k\frac{\mu_{L}^{6^{i}}(L)}{|L|}\right\}\right)\\
& \geq & \sum_{i=0}^{M}\sum_{L\in \mathcal L_{i}}\frac{1}{6} 6^{i}2^{-4k}|L|\\
& = & \tfrac{1}{6}2^{-4k} \sum_{i=0}^{M} (1-2^{-4k})^{i}\\
& = & \tfrac{1}{6}\left (1-(1-2^{-4k})^{M+1}\right )>\tfrac{1}{6}w_{k}([0,1]), 
\end{eqnarray*}
for $M=\left[\frac{\log(3)}{\log(1-2^{-4k})^{-1}}\right]+1$. Note that $M$ behaves exponentially with respect to $k$, which is exactly what one should expect from \eqref{e.distestimate}.

\end{proof}

\section{Proof of Main Theorem} 

Using Proposition \ref{p.dual} and \eqref{e.dual}, we can reduce the proof of the main theorem to finding a weight $w$, a Haar multiplier
$T$ and a function $f\in L^{2}(w)$ such that for all $C>0$
\begin{equation*}
\int_{\mathbb R}|T(fw)|^{2}\sigma(x)dx\geq C\ \int_{\mathbb R}|f|^{2}w(x)dx. 
\end{equation*}
There is one more reduction, we can use a gliding hump argument to deduce the infinitary  
inequality above from the following finitary one,

\begin{lemma}(Main Lemma) \label{l.main}
For all $k\ge 1$, there exist a Haar multiplier $T^{k}$ and a weight $w_{k} \not\equiv 0$, such that
\begin{equation}\label{e.mainl}
\int_{[0,1)}|T^{k}\left(w_{k} 1_{[0,1)}\right)(x)|^{2}\sigma_{k}(x)dx\geq  Ck^{2}w_{k}([0,1)),
\end{equation}
where $C$ is a universal constant.
\end{lemma}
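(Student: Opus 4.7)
The plan is to upgrade the weak-type lower bound in Proposition \ref{p.mainestimate} to the $L^{2}$ lower bound \eqref{e.mainl}. I take $T^k$ and $w_k$ exactly as produced by Proposition \ref{p.mainestimate}, with associated corona $\mathcal L = \bigcup_{i=0}^{M} \mathcal L_i$, and decompose $T^k = \sum_{L \in \mathcal L} T_L^k = \sum_L S_{L, r(L)}$ via \eqref{r.decompT}. By \eqref{e.mineqlevelset} in the proof of Lemma \ref{l.distestimate}, the set $\{|T_L^k w_k| > k w_k(L)/|L|\}$ equals $I(L)^-$, and by part (2) of Lemma \ref{l.nose} the intervals $I(L)^-$, $L \in \mathcal L$, are pairwise disjoint. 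Since $w_k$ is supported in $[0,1)$, $T^k(w_k 1_{[0,1)}) = T^k w_k$, so it suffices to prove $\int_{I(L)^-} |T^k w_k|^2 \sigma_k \geq c k^2 w_k(I(L)^-)$ for each $L$ and sum.

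The first step, which I expect to be the main obstacle, is a pointwise lower bound: for $L \in \mathcal L_i$ and $x \in I(L)^-$, I claim $|T^k w_k(x)| \geq c k \, w_k(L)/|L|$. Combining parts (4) and (5) of Lemma \ref{l.nose} with the first claim of Lemma \ref{l.distestimate} shows that $S_{L, r(L)}(w_k)$ has the same Haar coefficients on $\Xi_L$ as $S_{L, r(L)}(\mu_L^{6^i})$, so the computation in the proof of Lemma \ref{l.distestimate} gives $|T_L^k w_k(x)| = (2k+1) 6^i/3$. Descendant contributions $T_{L'}^k w_k$, for $L' \subsetneq L$ in $\mathcal L$, are supported in $L'$ and vanish on $I(L)^-$ because $L' \cap I(L) = \emptyset$ by part (2) of Lemma \ref{l.nose}. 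Ancestor contributions $T_{L''}^k w_k$, for $L'' \in \mathcal L_{i''}$ with $i'' < i$, satisfy the pointwise bound $|T_{L''}^k w_k| \leq (2k+1) 6^{i''}/3$, so the triangle inequality and the geometric sum $\sum_{i''=0}^{i-1} 6^{i''} < 6^i/5$ yield
\begin{equation*}
|T^k w_k(x)| \geq \frac{(2k+1) 6^i}{3} - \frac{(2k+1) 6^i}{15} = \frac{4(2k+1) 6^i}{15} \geq c k \, \frac{w_k(L)}{|L|},
\end{equation*}
using $w_k(L)/|L| = 2 \cdot 6^i/3$.

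Second, I bound $\sigma_k$ from below on $I(L)^-$. The interval $I(L)^- \in \mathcal D(L)$ contains no descendant stopping interval (again by part (2) of Lemma \ref{l.nose}), so $\Delta_1 I(L)^- = I(L)^-$, and Remark \ref{decompM} gives $Mw_k(x) \leq 8 w_k(L)/|L|$ on $I(L)^-$; hence $\sigma_k(x) \geq w_k(x) |L|^2/(64 w_k(L)^2)$ pointwise. Multiplying by the pointwise bound on $|T^k w_k|^2$ from the previous step cancels the $w_k(L)/|L|$ factors and yields $\int_{I(L)^-} |T^k w_k|^2 \sigma_k \geq c' k^2 w_k(I(L)^-)$. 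Summing over the pairwise disjoint $I(L)^-$ and invoking Proposition \ref{p.mainestimate} in the form $\sum_{L \in \mathcal L} w_k(I(L)^-) \geq \tfrac{1}{6} w_k([0,1))$ (which is exactly what Proposition \ref{p.mainestimate} states after identifying the level sets via \eqref{e.mineqlevelset}) produces
\begin{equation*}
\int_{[0,1)} |T^k w_k|^2 \sigma_k \geq C k^2 w_k([0,1)),
\end{equation*}
as required.
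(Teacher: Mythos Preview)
Your proof is correct, but it differs from the paper's in one interesting respect. The paper decouples the pieces $T_L^k$ by invoking Khintchine's inequality: it chooses the signs $\{r(L)\}_{L\in\mathcal L}$ so that
\[
\int_{[0,1)}|T^{k}w_{k}|^{2}\sigma_{k}\,dx \;\ge\; \sum_{L\in\mathcal L}\int_{[0,1)}|T^{k}_{L}w_{k}|^{2}\sigma_{k}\,dx,
\]
and then applies Chebyshev and \eqref{decompsigma} term by term, finishing with Proposition~\ref{p.mainestimate}. You instead obtain a \emph{pointwise} lower bound on $|T^k w_k|$ on each $I(L)^-$: the descendant pieces vanish there by part~(2) of Lemma~\ref{l.nose}, and the ancestor pieces are dominated via the geometric series $\sum_{i''<i}6^{i''}<6^i/5$, so they cannot cancel the main term $T_L^k w_k$. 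After that, your $\sigma_k$ estimate, the disjointness of the $I(L)^-$, and the summation via Proposition~\ref{p.mainestimate} are exactly as in the paper.

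Your route is a bit more computational but also more elementary: it avoids the averaging argument entirely and, as a byproduct, shows that \eqref{e.mainl} holds for \emph{every} choice of signs $r(L)\in\{\pm1\}$, not merely for a sign sequence extracted from Khintchine. The paper's route is shorter and more robust (it does not rely on the specific rate of growth of the heights $6^i$), at the cost of having to select the signs.
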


\begin{proof}
 Let $k\geq 1$ be a fixed natural number. Define $T^{k}$ and $w_{k}$ as in Proposition \ref{p.mainestimate} and let $\displaystyle(\mathcal L: \mathcal D( L))$ be the corona decomposition associated to $w_{k}$. We now use the decomposition of $T^{k}$ and $\sigma_{k}$ as suggested in \eqref{r.decompT} and \eqref{decompsigma} respectively. It is now time to determine the more convenient choices of signs $r_L$ that appear in the definition of \eqref{e.haarmul}. By Khintchine's inequalities we can find a sequence of signs $\{r_L\}_{L\in\mathcal L}$, $r_L\in \{+1, -1\}$ so that the first inequality below holds. That together with Chebyshev's inequality and
Proposition \ref{p.mainestimate} provide the desired estimate.
\begin{eqnarray*}
\int_{[0,1)}|T^{k}\left(w_{k} 1_{[0,1)}\right)(x)|^{2}\sigma_{k}(x)dx &\geq& 
\sum_{L\in \mathcal L}\int_{[0,1)}|T^{k}_{L}\left(w_{k} 1_{[0,1)}\right)(x)|^{2}\sigma_{k}(x)dx\\
 & \geq & \sum_{L\in \mathcal L}\left(\frac{w_{k}(L)}{|L|}\right)^2 k^{2}\sigma_{k}\left(\left\{ T_{L}^{k}w_{k}>k\frac{w_{k}(L)}{|L|}\right\}\right)\\
 & \geq & \frac{k^{2}}{64} \sum_{L\in \mathcal L}w_{k}\left(\Delta_{1}\left(\left\{ T_{L}^{k}w_{k}>k\frac{w_{k}(L)}{|L|}\right\}\right) \right)\\
 & \geq & \frac{k^{2}}{64} \sum_{L\in \mathcal L}w_{k}\left(\left\{ T_{L}^{k}w_{k}>k\frac{w_{k}(L)}{|L|}\right\}\right)\geq Ck^{2}w_{k}([0,1)),\\
\end{eqnarray*}
where $C$ is a universal constant. In the last line, we have used \eqref{e.mineqlevelset} and property (2) of Lemma \ref{l.nose}.
\end{proof}

\section{Proof of the measure preserving lemma}

\begin{proof} Let us start proving (1).  A backwards induction argument allows us to reduce the problem to proving that given $L\in L_{j-1}$ and for any $j\geq 1$,
\begin{equation}\label{e.stoppinglebesgue}
\sum_{L'\subset L, L'\in \mathcal L_{j}} |L'|= \frac{1}{6}(1-2^{-4k})|L|.
\end{equation}
The proof of \eqref{e.stoppinglebesgue} uses $|L'(I)|=\frac{1}{4}|I|$, with $L'(I)$ defined in \eqref{e.bijection}. 
\begin{eqnarray*}
\sum_{L'\subset L, L'\in \mathcal L_{j}} |L'|& = & \frac{1}{4}|[\textup{rep}(I(L)), \textup{rep}(L)]|\\
 &=& \frac{1}{4}\left[\frac{2}{3}|L|-\frac{2}{3}|I(L)|\right]\\
 &=& \frac{1}{6}(1-2^{-4k})|L|.
\end{eqnarray*}

Proof of (2). Let $L,L'\in \mathcal L$, we need to distinguish two cases.
\begin{enumerate}
\item If $L\cap L'=\emptyset$ then $I(L)\cap L'=\emptyset$ trivially.
\item Suppose $L'\subset L$, then there exist $i$ and $j$, $i>j$ such that $L'\in\mathcal L_{i}$ and $L\in\mathcal L_{j}$.
We can find $\tilde{L}\in \mathcal L(L)$ such that $L'\subset \tilde{L}$, then $I(L)\cap \tilde{L}=\emptyset$ by \eqref{e.disjointness}. Therefore $I(L)\cap L'=\emptyset$ as desired.
\end{enumerate}

Conclusion (3) follows trivially from the definition of the measures $\mu_{j}$ and $\mu_{j+1}$.  

Next we are proving (4). Let $I\in\Xi_{L}^{+}$ and let $L'(I)$ be as
 in \eqref{e.bijection}, then
$$
\mu_{j+1}(L'(I))= \mu_{L'(I)}^{6^{j+1}}(L'(I))=6^{j+1}\frac{2}{3}|L'(I)|=6^{j+1}\frac{2}{3}\frac{1}{4}|I|=6^{j}|I|=\mu_{L}^{6^{j}}(I)=\mu_{j}(I),
$$
where we have used the definition of $\mu_{j}$ and the fact that $\textup{jp}(L)\notin I$. We want to make one more comment, estimate (4) is the heart of the measure preserving property.

Proof of (5).
 Let $L\in\mathcal L_{j}$, then $I(L)\cap L'=\emptyset$ for all $L'\neq L$ by property (2). The following estimates conclude the proof,
\begin{eqnarray*}
\mu_{j}(I(L)) &=\mu_{L}^{6^{j}}(I(L))&=6^{j}\frac{2}{3}|I(L)|\\
\mu_{i}(I(L)) &=\mu_{I(L)}^{6^{j}}(I(L))&=6^{j}\frac{2}{3}|I(L)|.
\end{eqnarray*}

The proof of (6) can be deduced from the following equality and a backwards induction argument,
\begin{equation}\label{e.nochangemeasure}
\mu_{j+1}(L)= \mu_{j}(L)\quad {\text{ for all }}j\geq 0 {\text{ and for all }} L\in \mathcal L_{j}.
\end{equation} 
We complete the proof of \eqref{e.nochangemeasure} using (4) and \eqref{e.supp}. Let $L\in \mathcal L_{j}$, then $I(\tilde{L})\cap L=\emptyset$
 for all $\tilde{L}\in\mathcal L_{i}$, $i<j$ by property (2)  and we get the following
\begin{eqnarray*}
\mu_{j+1}(L)&=& \mu_{I(L)}^{6^{j}}(L) + \sum_{L'\in \mathcal L_{j+1}, L'\subset L}\mu^{6^{j+1}}_{L'}(L)\\
 & =& \mu_{I(L)}^{6^{j}}(L) + \sum_{L'\in \mathcal L_{j+1}, L'\subset L}\mu^{6^{j+1}}_{L'}(L')\\
  & =& \mu_{I(L)}^{6^{j}}(L) + \sum_{I\in \Xi^{+}_{L}}\mu_{L}^{6^{j}}(I)\\
  &=& \mu_{L}^{6^{j}}(L)=\mu_{j}(L).\\
\end{eqnarray*}

We now turn to proving (7). Let $L\in\mathcal L_{j}$, again a backwards induction argument reduces (7) to prove
\begin{equation*}
\mu_{j+1}\left(\left\{\left| S_{L, r(L)}\mu_{j+1}\right|>k\frac{\mu_{j+1}(L)}{|L|}\right\}\right)= \mu_{j}\left(\left\{ \left| S_{L, r(L)}\mu_{j}\right|>k\frac{\mu_{j}(L)}{|L|}\right\}\right).
\end{equation*}
The strategy will be to verify $S_{L, r(L)}\mu_{j+1}=S_{L, r(L)}\mu_{j}$. The rest of the proof follows from \eqref{e.mineqlevelset} and properties (5) and (6). This said, we are going to prove
\begin{equation}\label{e.eproduct}
\langle \mu_{j+1},h_{I}\rangle = \langle \mu_{j}, h_{I}\rangle \quad {\text{ for all }} I\in \Xi_{L}.
\end{equation}
Suppose $I=I(L)$, then $\mu_{j+1}|_{I(L)}=\mu_{j}|_{I(L)}$ proving \eqref{e.eproduct} for this particular case. Suppose $I\in \Xi_{L}$ but $I\neq I(L)$, then $I^{+}\in \Xi_{L}^{+}$ and $I^{-}=\left( I^{-}\backslash \bigcup_{J\in\Xi_{L}^{+}, J\subset I^{-}}J \right)\cup \bigcup_{J\in\Xi_{L}^{+}, J\subset I^{-}}J$.
The decomposition of $I^{+}$ and $I^{-}$ together with property (4)   proves the desired estimate 
\begin{eqnarray*}
\langle \mu_{j+1},h_{I}\rangle&=& \mu_{j+1}(I^{+})-\mu_{j+1}(I^{-})\\
&=& \mu_{j+1}(L'(I^{+}))- \mu_{j+1}(I^{-}\backslash \cup_{J\in\Xi_{L}^{+}, J\subset I^{-}}J)-\sum_{J\in\Xi_{L}^{+}, J\subset I^{-}}\mu_{j+1}(J)\\
&=& \mu_{j}(I^{+})-\mu_{j}(I(L))- \sum_{J\in\Xi_{L}^{+}, J\subset I^{-}}\mu_{j}(J)\\
&=& \mu_{j}(I^{+})-\mu_{j}(I^{-})= \langle \mu_{j}, h_{I}\rangle.
\end{eqnarray*}
\end{proof}

\begin{bibsection}
\begin{biblist}

\bib{MR1124164}{article}{
   author={Buckley, Stephen M.},
   title={Estimates for operator norms on weighted spaces and reverse Jensen
   inequalities},
   journal={Trans. Amer. Math. Soc.},
   volume={340},
   date={1993},
   number={1},
   pages={253--272},
   issn={0002-9947},
   review={\MR{1124164 (94a:42011)}},
}

\bib{MR2151228}{article}{
   author={Carro, Mar{\'{\i}}a J.},
   author={P{\'e}rez, Carlos},
   author={Soria, Fernando},
   author={Soria, Javier},
   title={Maximal functions and the control of weighted inequalities for the
   fractional integral operator},
   journal={Indiana Univ. Math. J.},
   volume={54},
   date={2005},
   number={3},
   pages={627--644},
   issn={0022-2518},
   review={\MR{2151228 (2006f:42016)}},
   doi={10.1512/iumj.2005.54.2714},
}

\bib{MR891775}{article}{
   author={Chanillo, Sagun},
   author={Wheeden, Richard L.},
   title={Some weighted norm inequalities for the area integral},
   journal={Indiana Univ. Math. J.},
   volume={36},
   date={1987},
   number={2},
   pages={277--294},
   issn={0022-2518},
   review={\MR{891775 (88e:42036)}},
   doi={10.1512/iumj.1987.36.36016},
}

\bib{MR0358205}{article}{
   author={Coifman, R. R.},
   author={Fefferman, C.},
   title={Weighted norm inequalities for maximal functions and singular
   integrals},
   journal={Studia Math.},
   volume={51},
   date={1974},
   pages={241--250},
   issn={0039-3223},
   review={\MR{0358205 (50 \#10670)}},
}

\bib{MR1761362}{article}{
   author={Cruz-Uribe, D.},
   author={P{\'e}rez, C.},
   title={Two weight extrapolation via the maximal operator},
   journal={J. Funct. Anal.},
   volume={174},
   date={2000},
   number={1},
   pages={1--17},
   issn={0022-1236},
   review={\MR{1761362 (2001g:42040)}},
   doi={10.1006/jfan.2000.3570},
}

\bib{MR1439553}{article}{
   author={Fefferman, R.},
   author={Pipher, J.},
   title={Multiparameter operators and sharp weighted inequalities},
   journal={Amer. J. Math.},
   volume={119},
   date={1997},
   number={2},
   pages={337--369},
   issn={0002-9327},
   review={\MR{1439553 (98b:42027)}},
}

\bib{MR0284802}{article}{
   author={Fefferman, C.},
   author={Stein, E. M.},
   title={Some maximal inequalities},
   journal={Amer. J. Math.},
   volume={93},
   date={1971},
   pages={107--115},
   issn={0002-9327},
   review={\MR{0284802 (44 \#2026)}},
}

\bib{MR0312139}{article}{
   author={Hunt, Richard},
   author={Muckenhoupt, Benjamin},
   author={Wheeden, Richard},
   title={Weighted norm inequalities for the conjugate function and Hilbert
   transform},
   journal={Trans. Amer. Math. Soc.},
   volume={176},
   date={1973},
   pages={227--251},
   issn={0002-9947},
   review={\MR{0312139 (47 \#701)}},
}

 \bib{1007.4330}{article}{
  author={Hyt\"onen, Tuomas}, 
  title={The sharp weighted bound for general Calderon-Zygmund operators}, 
  eprint={http://arxiv.org/abs/1007.4330},
  date={2010},
}

    \bib{1006.2530}{article}{
  author={Hyt\"onen, Tuomas}, 
  author={Lacey, Michael T.},
  author={Reguera, Maria Carmen},
      author={Sawyer, Eric T.},
      author={Uriarte-Tuero, Ignacio},
  author={ Vagharshakyan, Armen},
  title={Weak and Strong type $ A_p$ Estimates for Calder—n-Zygmund Operators}, 
  eprint={http://www.arxiv.org/abs/1006.2530},
  date={2010},
}

    \bib{1010.0755}{article}{
  author={Hyt\"onen, Tuomas}, 
  author={P\'erez, Carlos},
  author={Treil, Sergei},
  author={Volberg, Alexander},
  title={Sharp weighted estimates of the dyadic shifts and $A_2$ conjecture}, 
  eprint={http://arxiv.org/abs/1010.0755},
  date={2010},
}

\bib{0906.1941}{article}{
  author={Lacey, M. T.},
  author={Petermichl, S.},
  author={Reguera, M.C.},
  title={Sharp $A_2$ Inequality for Haar Shift Operators},
  journal={Math. Ann.},
  volume={348},
  date={2010},
  number={1},
  pages={127-141},
  eprint={http://arxiv.org/abs/0906.1941},
 
}

\bib{MR2511869}{article}{
   author={Lerner, Andrei K.},
   author={Ombrosi, Sheldy},
   author={P{\'e}rez, Carlos},
   title={Weak type estimates for singular integrals related to a dual
   problem of Muckenhoupt-Wheeden},
   journal={J. Fourier Anal. Appl.},
   volume={15},
   date={2009},
   number={3},
   pages={394--403},
   issn={1069-5869},
   review={\MR{2511869 (2010h:42021)}},
   doi={10.1007/s00041-008-9032-2},
}
		\bib{MR2480568}{article}{
   author={Lerner, Andrei K.},
   author={Ombrosi, Sheldy},
   author={P{\'e}rez, Carlos},
   title={$A_1$ bounds for Calder\'on-Zygmund operators related to a
   problem of Muckenhoupt and Wheeden},
   journal={Math. Res. Lett.},
   volume={16},
   date={2009},
   number={1},
   pages={149--156},
   issn={1073-2780},
   review={\MR{2480568 (2010a:42052)}},
}
		\bib{MR2427454}{article}{
   author={Lerner, Andrei K.},
   author={Ombrosi, Sheldy},
   author={P{\'e}rez, Carlos},
   title={Sharp $A_1$ bounds for Calder\'on-Zygmund operators and the
   relationship with a problem of Muckenhoupt and Wheeden},
   journal={Int. Math. Res. Not. IMRN},
   date={2008},
   number={6},
   pages={Art. ID rnm161, 11},
   issn={1073-7928},
   review={\MR{2427454 (2009e:42030)}},
   doi={10.1093/imrn/rnm161},
}

\bib{MR1260114}{article}{
   author={P{\'e}rez, C.},
   title={Weighted norm inequalities for singular integral operators},
   journal={J. London Math. Soc. (2)},
   volume={49},
   date={1994},
   number={2},
   pages={296--308},
   issn={0024-6107},
   review={\MR{1260114 (94m:42037)}},
}

\bib{MR676801}{article}{
   author={Sawyer, Eric T.},
   title={A characterization of a two-weight norm inequality for maximal
   operators},
   journal={Studia Math.},
   volume={75},
   date={1982},
   number={1},
   pages={1--11},
   issn={0039-3223},
   review={\MR{676801 (84i:42032)}},
}

\bib{MR1232192}{book}{
   author={Stein, Elias M.},
   title={Harmonic analysis: real-variable methods, orthogonality, and
   oscillatory integrals},
   series={Princeton Mathematical Series},
   volume={43},
   note={With the assistance of Timothy S. Murphy;
   Monographs in Harmonic Analysis, III},
   publisher={Princeton University Press},
   place={Princeton, NJ},
   date={1993},
   pages={xiv+695},
   isbn={0-691-03216-5 },
   review={\MR{1232192 (95c:42002)}},
}

\end{biblist}
\end{bibsection}
 
\end{document}